\newtheorem{thm}{Theorem}[section]
\newtheorem{lem}[thm]{Lemma}
\newtheorem{prop}[thm]{Proposition}
\theoremstyle{definition}
\newtheorem{defin}[thm]{Definition}
\theoremstyle{remark}
\newtheorem*{theorem*}{Theorem}
\def\VR{VR(Q_n;r)}
\def\nz{\mathbb{Z}}
\begin{document}
	\title[Connectivty of Vietoris-Rips complexes]{On the Connectivity of the Vietoris-Rips Complex of a Hypercube Graph}

	\author[M.~Bendersky]{Martin~Bendersky}
	\address{Department of Mathematics, Hunter College, CUNY,   695 Park Avenue New York, NY 10065, U.S.A.}
	\email{mbenders@hunter.cuny.edu}
	
	\author[J.~Grbi\' c]{Jelena~Grbi\' c}
	\address{School of Mathematical Sciences, University of Southampton, SO17 1BJ Southampton, UK}
	\email{J.Grbic@soton.ac.uk}

	\subjclass{Primary: 05E45, 55U10 Secondary:	05C10, 	55N31 }
	
	\keywords{Vietoris-Rips complexes, hypercubes, connectivity, total domination number of a graph, independence complex}
	
	\begin{abstract}
		We bring in the techniques of independence complexes and the notion of total dominating sets of a graph to bear on the question of the connectivity of the Vietoris-Rips complexes $VR(Q_n; r)$ of an $n$-hypercube graph. We obtain a lower bound for the connectivity of $VR(Q_n; r)$ for an arbitrary $n$-dimension hypercube and at all scale parameters $r$. The obtained bounds disprove the conjecture of Shukla that $\VR$ is $r$-connected.
		
	\end{abstract}
	\maketitle

	\section{Introduction}
	Let $Q_n$ be the set of vertices of the $n$-hypercube $\mathbb I^n$, endowed with the shortest path metric.  Thinking of the natural realisation of the hypercube $\mathbb I^n$ as the Cartesian product space $[0,1]^n$, to each vertex in $Q_n$, we associate a binary string of length $n$. This set of $2^n$ binary strings, which we will also denote by $Q_n$, can be equipped with the Hamming distance, that is, with the metrics which measures the number of coordinates at which two binary strings of length $n$ differ.
	
	Given the set $Q_n$ with the Hamming distance, $d_H$ and a rational number $r\geq 0$, the {\it Vietoris-Rips simplicial complex} $\VR$ of the $n$-hypercube graph at a scale parameter $r$ has for its vertex set $Q_n$ and $\sigma\subset Q_n$ as a simplex if the set diameter of $\sigma$ is at most $r$.
	There has been recent increased interest in and progress towards understanding the topology of the Vietoris-Rips complex $VR(Q_n; r)$ of the $n$-hypercube graph with the shortest path metric at scale parameter $r$ (\cite{aa}, \cite{av}, \cite{f}, \cite{s}). 
	Adamaskez and Adams \cite{aa} described the homotopy type of $VR(Q_n;2)$, Shukla  \cite{s} showed that the cohomology of $VR(Q_n;3)$ is concentrated in dimensions $4$ and $7$.   The homotopy type of  $VR(Q_n;3)$ is fully described in \cite{f}.  In~\cite{s} Shukla conjectured that for $n\geq r+2$, the reduced homology $\widetilde{H}_i(VR(Q_n; r);\nz)\neq 0$ if and only if $i=r+1$ or $r=2^r-1$ implying that $VR(Q_n;r)$ is $r$-connected. Using computer calculations, in \cite {av} and \cite{b}, it was shown that $VR(Q_6;4)$ is $6$ connected.  
	
	In this note we provide a method, using the total domination invariant of a graph, to determine a lower bound for the connectivity of the Vietoris-Rips complexes $\VR$ for an arbitrary $n$ and $r$. This approach provides an infinite family of counterexamples to Shukla's conjecture.

	To this end we recall some basic notions and results related to total dominating sets of a graph and indipendence complexes og a graph.
	
	A {\it graph} $G = (V(G);E(G))$ consists of a nonempty set $V(G)$ of objects called vertices together with a (possibly empty) set $E(G)$ of unordered pairs of distinct vertices of $G$ called edges. We will only consider simple graphs, those without directed edges or loops. An {\it isolated vertex} is a vertex that is not an endpoint of any edge.

	\begin{defin}[\cite{hy}] 
		A  {\it total dominating set} of a graph $G$ with no isolated vertices is a subset $S$ of vertices of $G$ such that every vertex is connected by an edge to a vertex in $S$.
	\end{defin} 
	
	It is worth emphasising that each vertex in $S$ also needs to be connected by an edge to another vertex in $S$. For example, in the figure below the set $S=\{1,2\}$ is not a total domination set since the point $\{1\}$ is not connected to any  point in $S$, while the set $\{0,2\}$ is a domination set since every point in the vertex set of $G$ is connected to some point in $S$.
	
	\setlength{\unitlength}{1.5cm}
	\begin{picture}(10,4)(5,1)
		\thicklines
		\put(8,3){\circle*{0.1}}
		\put(7.56,2.9){$0$}
		\put(10.1,3){$1$}
		\put(10,3){\circle*{0.1}}
		\put(8,3){\line(3,1){1.8}}
		\put(8,3){\line(3,0){2}}
		\put(9.8,3.6){\circle*{0.1}}
		\put(10,3.6){$2$}
	\end{picture}
	
	\vspace{-1.5cm}
	\begin{defin}
		The {\it total domination number} of G, denoted by $\gamma_t(G)$, is the minimal cardinality amongst all total domination sets of $G$.
	\end{defin} 
	
	Our interest in $\gamma_t(G)$ is inspired by a theorem of Chudnovsky \cite{c} which relates the total domination number of a graph to the connectivity of its independence complex. 
	
	Let $G=(V,E)$ be a graph. 
	The {\it complement} of $G$, denoted by $G^c=(V, E^c)$, is the graph with the vertex set $V$ and $e$ is an edge of $G^c$, that is, $e\in E^c$, if $e$ is not an edge in $G$.
	The {\it  independence complex of $G$}, denoted by $I(G)$, is the clique complex of $G^c$.

	\begin{thm}[\cite{c}, \cite{m}] \label{thm:ch} 
		If $\gamma_t(G) > 2k$, then $I(G)$ is $(k-1)$-connected.
	\end{thm}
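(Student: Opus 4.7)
The plan is to proceed by induction on $|V(G)|$ via the standard Mayer--Vietoris / van Kampen cover of an independence complex. For any vertex $v \in V(G)$, one decomposes
\[
I(G) = I(G - v) \cup \mathrm{St}_{I(G)}(v),
\]
where $\mathrm{St}_{I(G)}(v) = v * I(G - N[v])$ is the closed star of $v$ in $I(G)$. This star is a cone, hence contractible, and $I(G - v) \cap \mathrm{St}_{I(G)}(v)$ equals the link $I(G - N[v])$. Writing $\mathrm{conn}$ for topological connectivity, the gluing lemma yields the Meshulam-type inequality
\[
\mathrm{conn}(I(G)) \geq \min\bigl\{\mathrm{conn}(I(G - v)),\ \mathrm{conn}(I(G - N[v])) + 1\bigr\}.
\]

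The combinatorial input is that removing a closed neighborhood costs at most two in the total domination count: for any edge $\{u,v\}$ in $G$, if $S$ totally dominates $G - N[v]$, then $S \cup \{u,v\}$ totally dominates $G$ (indeed $u$ and $v$ dominate each other, and $v$ dominates all of $N(v)$). Hence $\gamma_t(G - N[v]) \geq \gamma_t(G) - 2$ whenever the left side is defined. Given $\gamma_t(G) > 2k$, this yields $\gamma_t(G - N[v]) > 2(k-1)$, so by the inductive hypothesis $I(G - N[v])$ is $(k-2)$-connected and the second term in the minimum above is at least $k-1$. The degenerate cases are disposed of directly: if $V(G) = N[v]$ then $\gamma_t(G) \leq 2$, incompatible with the hypothesis when $k \geq 1$; if $G - N[v]$ possesses an isolated vertex then its independence complex is a cone and hence contractible.

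To complete the induction one still needs $\mathrm{conn}(I(G - v)) \geq k - 1$, and this is the step I expect to be the main obstacle, since deleting a single vertex can drop $\gamma_t$ by one, just below the threshold required to reapply the inductive hypothesis. The natural resolution is to choose $v$ so that $I(G - v)$ becomes automatically contractible: if $v$ has a pendant neighbor $z$ in $G$, then $z$ is isolated in $G - v$ and $I(G - v)$ is a cone with apex $z$. When $G$ has no pendant vertices I would instead iterate the gluing inequality along a carefully chosen pair of adjacent endpoints, exploiting the fact that a double closed-neighborhood removal drops $\gamma_t$ by at most four while contributing two to the connectivity bound; after enough iterations the subgraph encountered either acquires an isolated vertex or becomes empty, at which point the induction closes and delivers $\mathrm{conn}(I(G)) \geq k - 1$.
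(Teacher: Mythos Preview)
The paper does not prove this theorem; it is quoted from Chudnovsky and Meshulam and used as a black box, so there is no in-paper argument to compare against. Judged on its own merits, the first half of your sketch is correct and is the standard Meshulam set-up: the decomposition $I(G)=I(G-v)\cup\mathrm{St}(v)$, the identification of the intersection with $I(G-N[v])$, the resulting gluing inequality, and the bound $\gamma_t(G-N[v])\ge\gamma_t(G)-2$ are all fine.

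The gap is exactly where you locate it, but your proposed repair does not close it. If you iterate by stripping off an adjacent pair $\{u,v\}$ at each stage, the residual graph $G_j$ after $j$ rounds only satisfies $\gamma_t(G_j)>2(k-j)$ (since $\{u,v\}$ already totally dominates the two removed vertices), so the side branch $I(G_{j-1}-N[\,\cdot\,])+1$ spawned at step $j$ is merely $(k-j)$-connected by the inductive hypothesis --- already too weak once $j\ge 2$. Further iteration never recovers the lost units, and your ``drops by four, gains two'' bookkeeping is exactly break-even, not a gain.

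What makes the vertex recursion work is a specific choice you did not make: fix one vertex $v$ and peel off its neighbours one at a time, setting $G_0=G$ and $G_j=G_{j-1}-w_j$ with $w_j\in N_G(v)$. After all of $N_G(v)$ is removed, $v$ is isolated and $I(G_m)$ is a cone. Each side branch is $G_{j-1}-N_{G_{j-1}}[w_j]=G-\bigl(N_G[w_j]\cup\{w_1,\dots,w_{j-1}\}\bigr)$, and if $S$ totally dominates this subgraph then $S\cup\{v,w_j\}$ totally dominates $G$: the adjacent pair $v,w_j$ covers $N_G[w_j]$ together with all previously removed $w_i\in N_G(v)$, and they dominate each other. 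Hence \emph{every} side branch still satisfies $\gamma_t>2(k-1)$, the inductive hypothesis gives $(k-2)$-connectivity there, and the chained inequality delivers $\mathrm{conn}(I(G))\ge k-1$. An equivalent clean route is to induct on $|E(G)|$ via the edge-deletion recursion
\[
\mathrm{conn}(I(G))\ \ge\ \min\bigl\{\mathrm{conn}(I(G-e)),\ \mathrm{conn}(I(G-N[e]))+1\bigr\},
\]
where the first term is controlled because deleting an edge can only raise $\gamma_t$ (or create an isolated vertex, making $I(G-e)$ contractible).
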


	For integers $n$ and $r$, and the set $Q_n$ of binary strings of length $n$ equipped with the Hamming distance $d_H$, we consider the graph $G_{n,r}$ whose vertex set is $Q_n$ and there is an edge in $G_{n,r}$ between two vertices, $a,b \in Q_n$ if and only if $d_H(a,b) \leq r$. Notice that the Vietoris-Rips complex $\VR$ is the click complex of $G_{n,r}$. 
	
	\section{Connectivity of Vietoris-Rips Complex}
	
	In this section we provide a lower bound for the connectivity of $\VR$.  The basis of the bound is a crude estimate for the total domination number. Recall that the {\it order of a graph} is the number of vertices of the graph, while the {\it order of a vertex} is the number of graph edges meeting at that vertex.  The maximal order of any vertex  in  a graph $G$ is denoted by $\Delta(G)$.
	
	\begin{thm}[\cite{hy}Theorem 2.11]\label{thm:hy}
		If $G$ is a graph of order $m$ with no isolated vertices, then 
		\[
		\gamma_t(G) \geq \frac{m}{\Delta(G)}.
		\]
	\end{thm}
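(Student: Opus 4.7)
The plan is to run a straightforward double-counting argument on the bipartite incidence between the vertex set $V(G)$ and a minimum total dominating set $S$. First, I would fix $S\subseteq V(G)$ to be any total dominating set with $|S|=\gamma_t(G)$; the conclusion will follow once I prove $m\leq |S|\cdot \Delta(G)$.

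The key step is to count the set
\[
P = \{(v,s)\in V(G)\times S : vs\in E(G)\}
\]
in two ways. On the one hand, since $S$ is a total dominating set, every vertex $v\in V(G)$ (including those in $S$, by the defining property of \emph{total} domination emphasised in the discussion after the definition) is joined by an edge to at least one element of $S$, so the projection $P\to V(G)$ onto the first coordinate is surjective and therefore $|P|\geq m$. On the other hand, fixing the second coordinate, for each $s\in S$ the number of pairs $(v,s)\in P$ equals the degree of $s$, which is at most $\Delta(G)$ by definition of the maximum degree. Summing over $S$ gives $|P|\leq |S|\cdot \Delta(G)$.

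Combining the two inequalities yields $m\leq \gamma_t(G)\cdot \Delta(G)$, and dividing by $\Delta(G)>0$ (which is positive because $G$ has no isolated vertices, so at least one edge exists) produces the claimed bound $\gamma_t(G)\geq m/\Delta(G)$. There is no genuine obstacle here; the only small subtlety I would flag explicitly is that the hypothesis of no isolated vertices is used twice, namely to ensure that a total dominating set exists at all (so that $\gamma_t(G)$ is well defined and finite) and to ensure $\Delta(G)\geq 1$ so that the final division is legitimate.
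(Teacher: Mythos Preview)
Your proof is correct and is essentially the same argument the paper gives: the paper's one-sentence justification that ``every vertex in $G$ belongs to the open neighborhood of at least one vertex in a minimal total domination set, that is, $\gamma_t(G)\,\Delta(G)\geq m$'' is exactly your double count of $P$, just stated more tersely. Your version spells out the counting explicitly and flags the role of the no-isolated-vertices hypothesis, but there is no difference in the underlying idea.
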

	The estimate in Theorem \ref{thm:hy} is an immediate consequence of the definition of a total domination set.   Namely,  every vertex in $G$  belongs to the open neighborhood of at least one vertex in a minimal total domination set,  that is,  $\gamma_t(G) \Delta(G) \geq m$.  Equality occurs when each vertex is connected to a point in the total dominating set by  one edge.  The figure below is  an example of such a graph..  This graph has total dominating set $\{S_1, \cdots S_{\gamma_t(G)} \}$.  The order of each point in the domination set is $\Delta(G)$.  The graph   is the union of isomorphic components each of which has two points from the dominating set and $2\Delta(G)$ vertices.  The estimate in Theorem \ref{thm:hy} becomes a strict inequality if some of the vertices at the termini of the vertical edges coincide. 
	
	\begin{center}
		\begin{tikzpicture}
			
			\draw [ thick ] (-1.8,1.9)node[above]{$S_1$} --( 0,-2.2)node[below left] { $\overset{\Delta(G)-1}  {\mbox{ vertices}}$}
			node[pos=1.00]{$\bullet$}
			node[pos=0.01]{$\bullet$};

			\draw [ thick] (-1.2,-1.8) -- (-1.76,1.7)
			node[pos=-0.01]{$\bullet$};
			\draw [, thick ] (-1.8,1.9) --( -2.2,-2)
			node[pos=1.00]{$\bullet$};
			
			\draw[ thick ] (-1.8,1.9) --( 3,1.9)node[above]{$S_2$}
			node[pos=1.00]{$\bullet$};

			\draw [, thick ] (2,-2) --( 3,1.9)
			node[pos=0.01]{$\bullet$};
			\draw [ thick] (3,-2) -- (3,1.9)
			node[pos=0.01]{$\bullet$};
			\draw [, thick ] (4.7,-2) --( 3,1.9)
			node[pos=0.01]{$\bullet$};

			\draw [ thick ] (6.8,1.9)node[above]{$S_3$} --(6,-2.2)
			%	node[below left] { $\overset{\Delta(G)-1}  {\mbox{ vertices}}$}
			node[pos=1.00]{$\bullet$}
			node[pos=0.01]{$\bullet$};

			\draw [ thick] (7,-1.8) -- (6.8,1.7)
			node[pos=-0.01]{$\bullet$};
			\draw [, thick ] (6.8,1.9) --( 8,-2)
			node[pos=1.00]{$\bullet$};

			\draw [ thick ] (10,1.9)node[above]{$S_4$} --(9,-2.2)
			node[pos=1.00]{$\bullet$}
			node[pos=0.01]{$\bullet$};
			
			\draw [, thick ] (6.8,1.9) --( 10,1.9)
			node[pos=0.01]{$\bullet$};
			\draw [ thick] (10,-2) -- (10,1.9)
			node[pos=0.01]{$\bullet$};
			\draw [, thick ] (10.7,-2) --( 10,1.9)
			node[pos=0.01]{$\bullet$};
			
			\draw[ thick,dashed] (11,0) -- (13,0);
			
			\draw[ thick, dashed] (-1.9,-1) -- (-1.5,-1);
			\draw[ thick, dashed] (-1.2,-1) -- (-.6,-1);
			\draw[ thick, dashed] (2.8,-1) -- (2.4,-1);
			\draw[ thick, dashed] (4,-1) -- (3.4,-1);
			\draw[ thick, dashed] (6.3,-1) -- (6.8,-1);
			\draw[ thick, dashed] (7.1,-1) -- (7.7,-1);
			\draw[ thick, dashed] (9.5,-1) -- (9.8,-1);
			\draw[ thick, dashed] (10.1,-1) -- (10.4,-1);

		\end{tikzpicture}
	\end{center}
	
	As $\VR$ is the independence complex of $G_{n,r}^c$, we will apply Theorems \ref{thm:ch} and  \ref{thm:hy} to the graph $G_{n,r}^c$. 
	
	\begin{lem}\label{lem:order}  The order of any vertex of $G_{n,r}^c$ is
		\[
		\sum_{i=r+1}^n \binom{n}{i}. 
		\]
	\end{lem}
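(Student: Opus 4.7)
The plan is to unwind the definitions and perform an elementary count. By definition, $G_{n,r}$ has an edge $\{a,b\}$ exactly when $d_H(a,b) \leq r$, so the complement $G_{n,r}^c$ has an edge $\{a,b\}$ precisely when $d_H(a,b) \geq r+1$. Hence, for a fixed vertex $a \in Q_n$, its degree in $G_{n,r}^c$ equals the number of binary strings $b \in Q_n$ with $d_H(a,b) \geq r+1$.

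Next I would observe that this quantity is independent of the choice of $a$: the group $(\mathbb{Z}/2)^n$ acts on $Q_n$ by coordinatewise XOR, and this action preserves the Hamming distance, so $G_{n,r}$ (and therefore $G_{n,r}^c$) is vertex-transitive. Alternatively one can note that translation by $a$ sends the set $\{b : d_H(a,b) = i\}$ bijectively onto the set $\{c : d_H(0^n, c) = i\}$ of length-$n$ binary strings of weight $i$.

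Finally I would count: the number of binary strings at Hamming distance exactly $i$ from a fixed string is $\binom{n}{i}$, corresponding to the choice of which $i$ of the $n$ coordinates are flipped. Summing over $i$ from $r+1$ to $n$ yields the stated formula
\[
\sum_{i=r+1}^n \binom{n}{i}.
\]
There is no real obstacle here; the statement is a direct consequence of the definition of the complement graph, vertex-transitivity under coordinate flips, and the basic combinatorial identity for counting strings of a given Hamming weight.
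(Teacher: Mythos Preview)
Your proof is correct and follows essentially the same approach as the paper: observe that edges in $G_{n,r}^c$ correspond to pairs at Hamming distance at least $r+1$, then count the $\binom{n}{i}$ vertices at distance exactly $i$ and sum over $i=r+1,\dots,n$. Your explicit remark on vertex-transitivity via the $(\mathbb{Z}/2)^n$ action is a nice addition that the paper leaves implicit, but the argument is otherwise the same.
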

	
	\begin{proof}  
		For any vertex $v \in Q_n$, there is are $\binom{n}{i}$ edges from $v$ to vertices $w$ such that $d_H(v,w)=i$. Thus there are $\sum_{i=r+1}^n \binom{n}{i}$ edges from any vertex $v$ to vertices with  Hamming distance $r+1$ or greater.
	\end{proof}

	\begin{thm} \label{thm:main} 
		Let $\alpha_{n,r} = \dfrac{2^{n-1}}{\sum_{i=r+1}^n \binom{n}{i} }$.  Then $\VR$ is $(k-1)$-connected, where
		\[
		k=	\begin{cases}
			\Big[ \alpha_{n,r} \Big] & \mbox{ if } \alpha_{n,r} \mbox{ is not an integer } \\
			\alpha_{n,r}  -1 & \mbox{ otherwise.} 
		\end{cases}
		\]
	\end{thm}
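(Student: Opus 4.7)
The proof should be a direct chaining together of the three ingredients already in hand: Theorem~\ref{thm:ch} (Chudnovsky/Meshulam), Theorem~\ref{thm:hy} (lower bound for $\gamma_t$ via maximum degree), and Lemma~\ref{lem:order} (the vertex degree in $G_{n,r}^c$). Since $\VR$ is the independence complex of $G_{n,r}^c$, everything is set up so that one just has to compute the relevant numerical bound and read off the largest admissible $k$.

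The plan is as follows. First, I apply Lemma~\ref{lem:order} to identify
\[
\Delta(G_{n,r}^c) = \sum_{i=r+1}^{n}\binom{n}{i},
\]
using the fact that $G_{n,r}^c$ is vertex-transitive, so every vertex has the same order. The graph $G_{n,r}^c$ has no isolated vertex as long as $r<n$ (which is the only interesting range, since $VR(Q_n;n)$ is a simplex). Next, I invoke Theorem~\ref{thm:hy} with $m=2^n$, which yields
\[
\gamma_t(G_{n,r}^c) \;\geq\; \frac{2^n}{\sum_{i=r+1}^{n}\binom{n}{i}} \;=\; 2\alpha_{n,r}.
\]

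Then I pick $k$ as the largest integer for which the strict inequality $\gamma_t(G_{n,r}^c) > 2k$ required by Theorem~\ref{thm:ch} is guaranteed by the estimate $\gamma_t(G_{n,r}^c)\geq 2\alpha_{n,r}$. A short case split does this: if $\alpha_{n,r}$ is not an integer, then $2\alpha_{n,r} > 2\lfloor \alpha_{n,r}\rfloor$, so $k=[\alpha_{n,r}]$ works; if $\alpha_{n,r}$ is an integer, then $2\alpha_{n,r} > 2(\alpha_{n,r}-1)$, so $k=\alpha_{n,r}-1$ works. In either case $\gamma_t(G_{n,r}^c)>2k$, and Theorem~\ref{thm:ch} gives that $I(G_{n,r}^c)=\VR$ is $(k-1)$-connected.

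There is no genuine obstacle here; the only place a reader could stumble is the bookkeeping of the strict versus non-strict inequality that forces the two-case formula for $k$. The real content of the paper lies upstream (observing that $\VR$ is an independence complex so that Chudnovsky's theorem applies, and computing $\Delta(G_{n,r}^c)$), and downstream (deducing counterexamples to Shukla's conjecture by comparing $k-1$ against $r$ for suitable $n$). The statement itself is a clean two-line consequence of the three cited results.
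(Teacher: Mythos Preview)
Your proposal is correct and follows exactly the paper's own argument: combine Lemma~\ref{lem:order} with Theorem~\ref{thm:hy} to get $\gamma_t(G_{n,r}^c)\geq 2\alpha_{n,r}>2k$, then invoke Theorem~\ref{thm:ch}. The paper compresses this into two lines without spelling out the case split on whether $\alpha_{n,r}$ is an integer, but the content is identical.
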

	\begin{proof}
		By Theorem \ref{thm:hy} and Lemma \ref{lem:order}, 
		\[
		\gamma_t(G_{n,r}^c) \geq 2  \alpha_{n,r} > 2k.
		\] 
		By Theorem \ref{thm:ch}, $\VR=I(G^c_{n,r})$ is $(k-1)$-connected.
	\end{proof}
	
	Theorem \ref{thm:main} is a coarse bound.  For example, for a fixed $r$, the lower bound on the connectivity approaches $0$ as $n \to \infty$.  In spite this, this estimate gives an infinite family of counterexamples for which the connectivity of $\VR$ given by Theorem~\ref{thm:main} is greater than $r+1$.  Moreover, this estimate suggests that the connectivity of $\VR$ does not grow linearly with respect to either $n$ or $r$ and illustrates that for some $n$ and $r$, $\VR$ is highly connected. 
	
	We least a few examples.
	\begin{center}
		\begin{tabular}{c c c c||} 
			
			$n$ & $r$ & connectivity\\ [0.5ex] 
			\hline
			7 & 5 & 6 \\ 
			\hline
			8 & 6 & 13 \\
			\hline
			9 & 7 & 24  \\
			\hline
			12 & 10 & 156 \\
			\hline
			18 & 15 & 761\\
			\hline
			18 & 16 & 6897\\
			\hline
			20 & 16 & 387\\
			\hline
			20&  18 & 24964\\
		\end{tabular}
	\end{center}
	
	We remark that the connectivity of Theorem \ref{thm:main} does not violate the non triviality of $H_{2^r-1}(\VR;\nz)$, \cite{av}.

	The large connectivity we obtain is based on a simple estimate of the total domination.   We expect that the  total domination of the graphs $G^c_{n,r}$ are significantly larger than provided by Theorem \ref{thm:hy}. For example, Feng's computer calculation, \cite{av}, shows that $VR(Q_6,4)$ is $6$-connected and $H_7(\VR;\nz) \neq 0$.  This implies that $\gamma_t(G^c_{6,4})\leq 16.$ By Theorem~\ref{thm:main}, we can only conclude that the total domination of $G^c_{6,4}$ is greater than equal to 8 and thus $VR(Q_6,4)$ is at least $3$-connected. It would be interesting to know how close the total domination number of $G^c_{6,4}$ is to $16$.
	
	In general, describing total domination sets for an arbitrary graph is a difficult problem. Hypercube graphs are rich in symmetry and one hopes that the geometry of hypercubes and their duals, cross-polytopes could shed light on a particularly nice structures of their total domination sets.
	We do have the following connection between certain non-trivial homology classes in $H_*(\VR;\nz)$ and total domination sets.
	
	\begin{prop} 
		Suppose a non-trivial homology class $a \in H_{m-1}(\VR;\nz) $ is represented by $\alpha$,  the boundary of a cross-polytope on $2m$ vertices.  Then the set of vertices of $\alpha$ is a total domination set of $G_{n,r}^c$
	\end{prop}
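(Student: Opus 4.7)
The whole claim reduces to the single statement that for every $u \in Q_n$ there exists $v \in V(\alpha)$ with $d_H(u,v) > r$, i.e.\ with $u$ joined to $v$ in $G_{n,r}^c$. The ``total'' requirement of total domination then comes for free: applied to $u = v_i \in V(\alpha)$, the witness $v$ must differ from $v_i$ (since $d_H(v_i,v_i)=0 \leq r$) and therefore lies in $V(\alpha)\setminus\{v_i\}$, providing the in-set neighbour needed for totality.

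I would establish this reduced statement by contradiction. Suppose some $u \in Q_n$ satisfies $d_H(u,v)\leq r$ for \emph{every} $v \in V(\alpha)$. The plan is to produce an $m$-chain in $\VR$ whose boundary is $\alpha$, contradicting non-triviality of the class $a = [\alpha] \in H_{m-1}(\VR;\nz)$. The natural candidate is the simplicial cone $\beta = u \ast \alpha$. To see that $\beta$ is a chain in $\VR$, take any simplex $\tau$ of $\alpha$: its vertices are pairwise at Hamming distance at most $r$ (because $\alpha\subseteq\VR$), and by hypothesis $d_H(u,w)\leq r$ for each $w\in\tau$. Hence $\{u\}\cup\tau$ has diameter at most $r$ and is a simplex of $\VR$, so $\beta \in C_m(\VR)$. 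The standard cone identity then gives
\[
\partial\beta \;=\; \partial(u\ast\alpha) \;=\; \alpha \,-\, u\ast\partial\alpha \;=\; \alpha,
\]
since $\alpha$ is a cycle. This forces $[\alpha] = 0$, contradicting the hypothesis.

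The argument is a clean cone-filling, so there is no serious obstacle. The only points that require care are verifying that the single hypothesis ``$d_H(u,v)\leq r$ for all $v\in V(\alpha)$'' is exactly what is needed to embed the whole cone $u\ast\alpha$ into $\VR$, and pinning down the sign in the chain-level boundary identity; both are routine. Notably, the argument does not use any special feature of the cross-polytope beyond $\alpha$ being a cycle, so the same reasoning would show that the vertex set of \emph{any} non-bounding cycle is a total dominating set of $G_{n,r}^c$.
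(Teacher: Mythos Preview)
Your proposal is correct and matches the paper's argument: both assume some vertex $u$ fails to be dominated, form the cone $u\ast\alpha$ inside $\VR$, and contradict the non-triviality of $[\alpha]$. The only difference is that the paper secures the ``total'' part by invoking the cross-polytope's antipodal pairs $\{v_i,w_i\}$ as a matching in $G_{n,r}^c$, whereas you obtain it by running the same cone argument with $u\in V(\alpha)$; as you observe, this shows the cross-polytope hypothesis is not actually needed, and the vertex set of any non-bounding cycle is a total dominating set of $G_{n,r}^c$.
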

	\begin{proof}
		Suppose the vertices of $\alpha$ are $\mathcal{C}=\{ v_1,w_1,v_2,w_2, \ldots, v_m,w_m\}$, where  $\{v_i,w_i\}$ is not an edge in $\VR$ and there is an edge connecting all other vertices in $\mathcal{C}$.  The vertices of $\alpha$ represent a matching edge set in $G_{n,r}^c$ (that is, a set of edges without common vertices).  If $\mathcal{C}$ was not a total domination set, then there is a vertex $u$ which is not connected to any vertex of $\mathcal{C}$ in $G_{n,r}^c$.   This means that  $u *\alpha$, the cone on $\alpha$, is a subcomplex of $\VR$ which contradicts the non-triviality of $\alpha$.
	\end{proof}

\bibliographystyle{amsalpha}

\begin{thebibliography}{99}
	\bibitem{aa} M.~Adamaszek, H.~Adams,  On Vietoris-Rips complexes of Hypercube Graphs,  {\em J. Appl. Comput. Topol.}, 6(2):177–192,  2022.
	
	\bibitem{av} H.~Adams, \^{Z}.~ Virk,  Lower Bounds on the Homology of the Vietoris-Rips Complexes of Hypercube Graphs. arXiv2309.06222v1.
	
	\bibitem{b} Y. ~ Berman,   Private communication.
	
	\bibitem{c} M.~Chudnovsky,   Systems of disjoint representatives,  2000.
	
	\bibitem{f}]  Z.~Feng.  Homotopy types of Vietoris-Rips Complexes of Hypercube Graphs, Arxiv: 2305.0708V1.
	
	\bibitem{hy} M.A.~Henning, A. Yeo {\em Total Domination of Graphs} Springer Monographs in Mathematics. Springer, New York, 2013
	
	
	
	\bibitem{m} R.~ Meshulam   Domination Numbers and Homology, {\em J.  Combin. Theory Ser. A}, 102(2):321-330, 2003.
	
	\bibitem{s} S. ~Shukla.  On Vietoris-Rips complexes (with scale 3) of Hypercube Graphs, {\em SIAM J.  Discrete Math.},   37(3):1472-1495,, 2023.
	
\end{thebibliography}

\end{document}